\newtheorem{lemma}{Lemma}[section]
\newtheorem{theorem}[lemma]{Theorem}
\newtheorem{prop}[lemma]{Proposition}
\theoremstyle{definition}
\newtheorem{definition}[lemma]{Definition}
\newtheorem{example}[lemma]{Example}
\newtheorem{remark}[lemma]{Remark}
\newcommand{\Z}{\mathbf{Z}}
\newcommand{\QQ}{\mathbf{Q}}
\newcommand{\wt}[1]{\widetilde{#1}}
\newcommand{\co}{\mathscr{O}}
\newcommand{\GG}{\mathbb{G}}
\newcommand{\cA}{\mathscr{A}}
\renewcommand{\H}{\widehat{\mathscr{H}}}
\newcommand{\cris}{\mathrm{cris}}
\newcommand{\fr}[1]{\mathfrak{#1}}
\newcommand{\ol}[1]{\overline{#1}}
\renewcommand{\sp}{\mathrm{sp}}
\newcommand{\et}{{\acute{\mathrm{e}}\mathrm{t}}}
\renewcommand{\H}{{H}}
\newcommand{\gal}[1]{G_{#1}}
\newtheorem{abc}{abc}
\newtheorem{letterthm}[abc]{Theorem}
\title{Galois representations and ordinary reduction}
\author{Sanath Devalapurkar}
\begin{document}
\begin{abstract}
    We provide conditions on the $p$-adic Galois representation of a smooth
    proper variety over a complete nonarchimedean extension of $\QQ_p$ to have
    (potentially) good ordinary reduction.
\end{abstract}

\maketitle

\section{Introduction}
Let $K/\QQ_p$ be a finite extension with perfect residue field $k$ of
characteristic $p$, and let $C$ be a completion of an algebraic closure of $K$.
A smooth proper variety $X_0/K$ has \emph{good reduction} if there it admits a
smooth proper model $\fr{X}_0$ over $\co_K$. Let $X = X_0\otimes_K\ol{K}$. In
this case:
\begin{enumerate}
    \item The $\gal{K}$-representation on $\H^n_\et(X;\QQ_\ell)$ is unramified
	for all $n$ and all primes $\ell\neq p$; moreover,
    \item The $\gal{K}$-representation $\H^n_\et(X;\QQ_p)$ is crystalline for
	all $n$.
\end{enumerate}

In many cases, there is a converse to this result. For instance, when $X_0$ is
an abelian variety, the N\'eron--Ogg--Shafarevich theorem (see
\cite{serre-tate}) provides a converse: $H^1_\et(X;\QQ_\ell)$ is an unramified
$\gal{K}$-representation for one (hence all) primes $\ell\neq p$ if and only if
$X_0$ has good reduction. The $p$-adic analogue of this result was proved by
Coleman--Iovita (\cite[Theorem II.4.7]{coleman-iovita}): $X_0$ has good
reduction if and only if $H^1_\et(X;\QQ_p)$ is a crystalline $p$-adic
$\gal{K}$-representation. In \cite{neron-ogg-k3}, Chiarellotto, Lazda, and
Liedtke obtained analogous statements for certain K3 surfaces.

Let $A$ be an abelian variety over $K$ of dimension $g$ with good reduction, so
there exists a smooth proper model $\cA$ over $\co_K$ whose generic fiber is
$K$. The special fiber $\cA_\sp$ is an abelian variety over a perfect field of
characteristic $p$. The associated $p$-divisible group $\cA_\sp[p^\infty]$ has
height $2g$; however, its connected component $\cA_\sp[p^\infty]^\circ$ need
not have the same height. The height of the formal group
$\cA_\sp[p^\infty]^\circ$ provides a stratification of the moduli space of
abelian varieties. The subset with maximal height (namely, $g$) is dense in
this moduli space; in this case, the abelian variety has ordinary reduction.

In light of the above discussion, it is natural to ask if there is a
Galois-theoretic criterion on the \'etale cohomology of $A\otimes_K\ol{K}$
which ensures that $A$ has ordinary reduction. The answer is yes:
\begin{letterthm}
    \label{thm1}
    Let $A$ be an abelian variety with good reduction, and let $V$ denote the
    $p$-adic $\gal{K}$-representation $\H^n_\et(A\otimes_K \ol{K};\QQ_p)$. Then
    $A$ is ordinary iff there is a complete filtration $F^i V$ of $V$ which
    splits, such that the inertia subgroup of $\gal{K}$ acts trivially on $F^i
    V/F^{i+1} V$ for $i>\dim A$, and by the $p$-adic cyclotomic character for
    $i\leq \dim A$.
\end{letterthm}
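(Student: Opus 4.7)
The plan is to exploit the classical decomposition of ordinary $p$-divisible groups together with integral $p$-adic Hodge theory; the case $n=1$ carries the essential content, with general $n$ reduced to it via exterior powers of $\H^1$.

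\textbf{Forward direction.} Assume $A$ is ordinary, so the $p$-divisible group $\cG = \cA_\sp[p^\infty]$ fits in a connected-\'etale short exact sequence $0 \to \cG^\circ \to \cG \to \cG^\et \to 0$ over $\co_K$, with $\cG^\circ$ of multiplicative type and height $g = \dim A$ and $\cG^\et$ \'etale of height $g$. Taking Tate modules and identifying $T_p A \cong T_p \cG$ under good reduction produces a two-step $\gal{K}$-stable filtration of $T_p A$: as $I_K$-modules, $T_p \cG^\et$ is trivial and $T_p \cG^\circ$ is a sum of $g$ copies of the cyclotomic character (since $\cG^\circ$ is Cartier dual to an \'etale $p$-divisible group). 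Because the inertia characters on the sub and quotient are disjoint, the extension splits as a sequence of $I_K$-modules; refining with ordered bases of each piece yields a complete $I_K$-split filtration of $T_p A$. Passing to $V = \H^n_\et(A \otimes_K \ol{K}; \QQ_p) \cong \wedge^n \Hom(T_p A, \QQ_p)$ by taking exterior powers and duals (with the appropriate Tate twist via Poincar\'e duality) transports this filtration to $V$ with the prescribed graded piece structure.

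\textbf{Backward direction.} First reduce to $n=1$: the inertia characters appearing in graded pieces of $\wedge^n$ of a representation are products of $n$ of the characters appearing in the original, so the constraint that only two specific characters appear in $V$ pins down the characters in $\H^1$. Now $V = \H^1_\et(A_{\ol{K}}; \QQ_p)$ is crystalline by good reduction; let $D = D_\cris(V)$ be its associated filtered $\varphi$-module. The splitting hypothesis together with the explicit graded piece structure forces the Newton polygon of $\varphi$ on $D$ to have slopes $0$ and $1$ with multiplicity $g$ each, and to coincide with the Hodge polygon. By the Fontaine--Laffaille (or Breuil--Kisin) classification of crystalline representations with Hodge--Tate weights in $\{0,1\}$, $V$ arises from an ordinary $p$-divisible group $\cG'$; good reduction identifies $\cG'$ with $\cA_\sp[p^\infty]$, so $A$ is ordinary.

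\textbf{Main obstacle.} The substantive difficulty is the backward direction, where one must recover the ordinary algebro-geometric structure of the $p$-divisible group from purely Galois-theoretic data. This relies on non-trivial input from integral $p$-adic Hodge theory---the equivalence between crystalline Galois representations of Hodge--Tate weights in $\{0,1\}$ and $p$-divisible groups over $\co_K$---combined with the slope-vs-filtration characterization of ordinariness. A secondary technical point is careful tracking of sign conventions (cyclotomic versus inverse-cyclotomic) when passing between $T_p A$, $\H^n_\et(A;\QQ_p)$, and their graded pieces via duality and exterior powers.
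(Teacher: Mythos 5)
Your proposal is correct, and its forward direction (connected--\'etale sequence of $\cA_\sp[p^\infty]$, Tate modules, identification of $H^1_\et$ of the generic and special fibers) is the same as the paper's. The backward direction is where you diverge: the paper works directly with the $\gal{K}$-stable lattice $W\cap\Lambda$ sitting inside $\Lambda = H^1_\et(A_{\ol K};\Z_p)$, uses the Brinon--Conrad correspondence between such lattices and $p$-divisible groups to produce a $p$-divisible subgroup of $A[p^\infty]$, lifts it over $\co_K$, and identifies it as multiplicative from the inertia action on its Tate module; you instead pass to $D_\cris(V)$, argue that the split ordinary filtration forces the Frobenius slopes to be $0$ and $1$ with the Newton and Hodge polygons coinciding, and then invoke the classification of crystalline representations with Hodge--Tate weights in $\{0,1\}$. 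Both routes ultimately rest on the same equivalence between crystalline data in weights $\{0,1\}$ and $p$-divisible groups, and your slope argument is in fact the alternative proof the paper itself alludes to in its closing remark (it is the template for Theorems \ref{thm2}--\ref{thm4}, where the key point is precisely that a split ordinary filtration forces integer Newton slopes). The paper's subgroup construction is more elementary and stays integral throughout; your version generalizes better beyond $H^1$. Two small cautions: your reduction of the backward direction for general $n$ to $n=1$ via exterior powers is asserted rather than proved (knowing the inertia characters on graded pieces of $\wedge^n H^1$ does not formally pin down a filtration on $H^1$ without a little more care), though the paper likewise dismisses general $n$ without argument; and when you claim the slopes are exactly $0$ and $1$ you are implicitly using that a crystalline character whose inertia restriction is $\chi^j$ is $\chi^j$ times an unramified character with unit Frobenius eigenvalue --- worth saying explicitly.
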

\newtheorem*{introthm1}{Theorem \ref{thm1}}
\begin{remark}
    This result is probably well-known to the experts; however, we were unable
    to find a reference in the literature, so we include a proof in this paper.
\end{remark}
Note that as ordinarity is a condition on the $p$-divisible group of $\cA_\sp$,
one only expects such a criterion on the $p$-adic \'etale cohomology of
$A\otimes_K\ol{K}$, and not its $\ell$-adic \'etale cohomology. 

Motivated by this result, one can ask if an analogue of Theorem \ref{thm1}
holds for smooth proper varieties which are not necessarily abelian varieties.
In order to do this, one of course needs an appropriate definition of the term
``ordinary'' for general smooth proper varieties. This was provided by Bloch
and Kato in \cite{bloch-kato}. Let $X$ be a smooth proper variety over $K$ with
good reduction, and let $\fr{X}$ denote a smooth proper model for $X$ over
$\co_K$.
\begin{definition}
    Let $d\Omega^j_{\fr{X}_\sp}$ denote the sheaf of exact differentials on
    $\fr{X}_\sp$. We say that $X$ has \emph{ordinary reduction} if
    $H^i(\fr{X}_\sp, d\Omega^j_{\fr{X}_\sp})$ is $0$ for all $i,j$.
\end{definition}

In analogy with the condition imposed in Theorem \ref{thm1}, we also make the
following definition.
\begin{definition}\label{ordinary-gal-rep}
    A $\gal{K}$-representation $V$ is \emph{ordinary} if there is a finite
    filtration by $\gal{K}$-stable vector spaces $F^i V$ such that the inertia
    subgroup of $\gal{K}$ acts on $F^i V/F^{i+1} V$ by some power $\chi^{n_i}$
    of the cyclotomic character.
\end{definition}

The main goal of this paper is to prove the following analogues of Theorem
\ref{thm1}:
\begin{letterthm}
    \label{thm2}
    Let $X_0$ be a smooth proper variety over $K$ with a smooth proper model
    $\fr{X}$ over $\co_K$ such that $H^\ast_\cris(\fr{X}_\sp/W(k))$ and
    $H^\ast(\fr{X}_\sp,W\Omega^\ast_{\fr{X}_\sp})$ are torsion-free. The
    \'etale cohomology $H^i_\et(X;\QQ_p)$ is an ordinary
    $\gal{K}$-representation whose associated filtration (from Definition
    \ref{ordinary-gal-rep}) splits for all $i$ if and only if $X_0$ has
    ordinary reduction.
\end{letterthm}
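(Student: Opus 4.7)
The plan is to combine Bloch--Kato's cohomological characterization of ordinary reduction with the crystalline comparison isomorphism for $p$-adic \'etale cohomology and Katz's equivalence between unit-root $F$-isocrystals on $\spec k$ and unramified continuous $p$-adic representations of $\gal{K}$. Under the torsion-freeness hypotheses of the theorem, \cite{bloch-kato} shows that $X_0$ has ordinary reduction if and only if, for each $n$, the slope filtration on the $F$-isocrystal $H^n_\cris(\fr{X}_\sp/W(k))[1/p]$ splits as $\bigoplus_{j=0}^n H^{n-j}(\fr{X}_\sp, W\Omega^j_{\fr{X}_\sp})[1/p]$ with the $j$-th summand pure of Frobenius slope $j$; equivalently, the Newton and Hodge polygons of $H^n_\cris$ coincide for every $n$. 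This serves as the bridge between the Galois-theoretic and geometric sides.

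For the forward direction, suppose $X_0$ has ordinary reduction. The decomposition above can be written as $H^n_\cris(\fr{X}_\sp)[1/p] \cong \bigoplus_j U_j(-j)$ for unit-root $F$-isocrystals $U_j$ over $K_0 = W(k)[1/p]$. Fontaine's crystalline comparison theorem (proved by Faltings and Tsuji in complete generality) gives an isomorphism $H^n_\et(X;\QQ_p)\otimes_{\QQ_p} B_\cris \cong H^n_\cris(\fr{X}_\sp)[1/p] \otimes_{K_0} B_\cris$ of filtered $\varphi$-modules carrying compatible $\gal{K}$-actions. Katz's equivalence identifies each unit-root piece $U_j$ with an unramified $\gal{K}$-representation $V_j$; extracting the \'etale cohomology from the filtered $\varphi$-module structure then yields $H^n_\et(X;\QQ_p) \cong \bigoplus_j V_j \otimes_{\QQ_p} \QQ_p(-j)$, which exhibits the required splitting ordinary filtration.

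For the converse, suppose $H^n_\et(X;\QQ_p) \cong \bigoplus_i W_i \otimes_{\QQ_p} \QQ_p(m_i)$ with each $W_i$ unramified. Tensoring with $B_\cris$ and inverting the crystalline comparison recovers a decomposition of $H^n_\cris(\fr{X}_\sp)[1/p]$ into Frobenius subspaces of integer slopes; in particular, the Newton polygon of $H^n_\cris$ has only integer slopes. Matching this with the Hodge--Tate decomposition of $H^n_\et(X;\QQ_p)$ identifies the multiplicity of each weight with the corresponding Hodge number $\dim_k H^{n-j}(\fr{X}_\sp, \Omega^j_{\fr{X}_\sp})$, so the Newton and Hodge polygons coincide. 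Bloch--Kato's criterion then forces $X_0$ to have ordinary reduction.

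The main obstacle is the converse direction: a Galois-theoretic splitting a priori only captures the rational slope decomposition of crystalline cohomology, so one must leverage the torsion-freeness of $H^\ast_\cris(\fr{X}_\sp/W(k))$ and $H^\ast(\fr{X}_\sp, W\Omega^\ast_{\fr{X}_\sp})$ to lift the decomposition to the integral level where the Bloch--Kato condition $H^i(\fr{X}_\sp, d\Omega^j_{\fr{X}_\sp}) = 0$ is naturally phrased. Carefully tracking how the integral slope filtration on de Rham--Witt cohomology corresponds to the Hodge--Tate filtration on $H^n_\et$ forms the technical heart of this step.
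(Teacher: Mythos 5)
Your converse direction is sound and in fact extracts more than the paper's own route does: a genuine direct--sum decomposition of $H^n_\et(X;\QQ_p)$ into unramified twists of $\QQ_p(m_i)$ gives, via $D_{\cris}$, a decomposition of the $\varphi$-module $H^n_\cris(\fr{X}_\sp/W(k))[1/p]$ into isoclinic pieces whose integer slopes match (up to a sign convention) the Hodge--Tate weights, so you get Newton $=$ Hodge directly, whereas the paper descends the splitting integrally through Breuil--Kisin modules (Theorem \ref{bk}, Lemma \ref{ordinary-bk}) and its Theorem \ref{thm4} only records integer slopes before citing Bloch--Kato. Two points still need care in your write-up: you must separately supply the Hodge--de Rham degeneration half of Proposition \ref{ordinary-cond} (this is where the torsion-freeness of $H^\ast(\fr{X}_\sp,W\Omega^\ast_{\fr{X}_\sp})$ is actually used), and you must justify that the Hodge--Tate multiplicities of the generic fibre compute the Hodge numbers of the \emph{special} fibre; both are citable under the stated torsion-freeness hypotheses, so I regard these as gaps in exposition rather than in substance.

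The forward direction, however, contains a genuine error. The crystalline comparison identifies $H^n_\et(X;\QQ_p)$ with the \emph{filtered} $\varphi$-module $\bigl(H^n_\cris(\fr{X}_\sp/W(k))[1/p],\varphi,\mathrm{Fil}^\bullet H^n_{\dR}(X/K)\bigr)$, and a Galois-equivariant direct-sum decomposition corresponds to a $\varphi$-stable decomposition that is \emph{also compatible with the Hodge filtration}. Ordinary reduction gives you the slope decomposition $\bigoplus_j U_j(-j)$ of the $\varphi$-module, but it does not force the Hodge filtration to split along it. Already for an elliptic curve $E$ over $\QQ_p$ with good ordinary reduction that is not isogenous to the canonical lift of its special fibre, the line $\mathrm{Fil}^1\subset H^1_{\dR}$ is in general position relative to the two slope eigenlines, the filtered $\varphi$-module is a non-split extension, and correspondingly $V_p(E)$ is a non-split extension of an unramified character by a cyclotomic twist (by Tate's full-faithfulness theorem for $p$-divisible groups, a Galois splitting would split $E[p^\infty]$ up to isogeny). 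So the step ``extracting the \'etale cohomology from the filtered $\varphi$-module structure yields $H^n_\et(X;\QQ_p)\cong\bigoplus_j V_j\otimes\QQ_p(-j)$'' fails: ordinarity produces the ordinary \emph{filtration} on $H^n_\et(X;\QQ_p)$ but not its splitting. (The paper's one-line proof of Theorem \ref{thm2} via Theorem \ref{thm4} and \cite[Proposition 7.3(7)]{bloch-kato} does not address this implication either, so the difficulty sits in the statement as much as in your argument; but as written, this is the step of your proof that breaks.)
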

\newtheorem*{introthm2}{Theorem \ref{thm2}}
\begin{letterthm}
    \label{thm3}
    Let $X_0$ be a K3 surface over $K$ with good reduction. The \'etale
    cohomology $H^2_\et(X;\QQ_p)$ is an ordinary $\gal{K}$-representation whose
    associated filtration (from Definition \ref{ordinary-gal-rep}) splits if
    and only if $X_0$ has ordinary reduction.
\end{letterthm}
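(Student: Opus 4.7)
The plan is to deduce this theorem directly from Theorem \ref{thm2} by verifying its torsion-freeness hypotheses for K3 surfaces and observing that the cohomological conditions in Theorem \ref{thm2} reduce to a condition on $H^2$ alone. For a K3 surface $X$, one has $H^0_\et(X;\QQ_p) = \QQ_p$, $H^1_\et(X;\QQ_p) = H^3_\et(X;\QQ_p) = 0$, and $H^4_\et(X;\QQ_p) = \QQ_p(-2)$; each of these is tautologically an ordinary $\gal{K}$-representation with split filtration, so the global ordinarity hypothesis of Theorem \ref{thm2} collapses precisely to the condition on $H^2_\et$ appearing in the statement.

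The substantive work is then to check that $H^*_\cris(\fr{X}_\sp/W(k))$ and $H^*(\fr{X}_\sp, W\Omega^*_{\fr{X}_\sp})$ are torsion-free for a smooth proper model $\fr{X}$ of $X_0$ over $\co_K$. Crystalline torsion-freeness for K3 surfaces is classical: the Hodge numbers are $h^{0,0}=h^{0,2}=h^{2,0}=h^{2,2}=1$ and $h^{1,1}=20$, with all other $h^{i,j}$ vanishing, the Hodge--de Rham spectral sequence of $\fr{X}_\sp$ degenerates at $E_1$ with torsion-free graded pieces, and a rank count against the $24$-dimensional total \'etale cohomology forces $H^*_\cris(\fr{X}_\sp/W(k))$ to be torsion-free in each degree.

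The de Rham--Witt case is the main obstacle, since in general the slope spectral sequence $E_1^{i,j} = H^j(\fr{X}_\sp, W\Omega^i_{\fr{X}_\sp}) \Rightarrow H^{i+j}_\cris(\fr{X}_\sp/W(k))$ need not degenerate at $E_1$ and the Hodge--Witt groups can carry $V$-torsion (as happens for supersingular K3 surfaces). I would establish both directions simultaneously as follows. If $X_0$ has Bloch--Kato ordinary reduction, then $\fr{X}_\sp$ is Hodge--Witt, the slope spectral sequence degenerates at $E_1$, and the results of Illusie--Raynaud yield the required torsion-freeness, so Theorem \ref{thm2} applies. Conversely, if $H^2_\et(X;\QQ_p)$ is an ordinary $\gal{K}$-representation with split filtration, then via the crystalline comparison theorem the graded pieces of this filtration match the Hodge pieces $H^j(\fr{X}_\sp, \Omega^{2-j})$, and the cyclotomic twists appearing in Definition \ref{ordinary-gal-rep} pin the Newton slopes of Frobenius on $H^2_\cris(\fr{X}_\sp/W(k))$ to be $0,1,2$ with multiplicities $1,20,1$. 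The Newton polygon thus coincides with the Hodge polygon, which by the Mazur--Ogus/Illusie--Raynaud criterion forces degeneration of the slope spectral sequence and torsion-freeness of the Hodge--Witt cohomology, after which Theorem \ref{thm2} again applies.

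The main technical point I expect to require care is the translation between the split filtration on the \'etale side and the slope decomposition on the crystalline side, in particular verifying the precise multiplicities $(1,20,1)$ rather than merely that the slopes are integers in $[0,2]$; once this is in hand, the application of Illusie--Raynaud is routine.
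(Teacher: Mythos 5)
Your proposal is essentially correct, but it takes a genuinely different route from the paper. The paper does not go through Theorem \ref{thm2} at all: it applies Theorem \ref{thm4} directly (citing Liedtke for the torsion-freeness of $H^\ast_\cris(\fr{X}_\sp/W(k))$) to conclude that the Newton polygon of $H^2_\cris$ has integer slopes, rules out the supersingular case, and then invokes the Artin--Mazur classification of K3 heights --- for a non-supersingular K3 the slopes are $1-1/h,\,1,\,1+1/h$, which are integral only when $h=1$ --- to force Newton $=$ Hodge; Katz's criterion (Proposition \ref{ordinary-cond}) then gives ordinarity. This completely sidesteps the Hodge--Witt torsion-freeness hypothesis of Theorem \ref{thm2}, which, as you correctly identify, is the sticking point of your route: that hypothesis fails precisely in the supersingular case, so verifying it is essentially equivalent to the theorem itself. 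Your resolution --- computing the full Newton polygon $(0,1,2)$ with multiplicities $(1,20,1)$ from the split ordinary filtration via the identification of the exponents $n_i$ with the Hodge--Tate weights, each graded piece being an unramified twist of $\QQ_p(n_i)$ and hence isoclinic --- is sound and in fact does more work than the paper's Theorem \ref{thm4} (which only yields integrality of slopes and therefore cannot by itself exclude the supersingular case, where all $22$ slopes equal $1$); your multiplicity count replaces the paper's appeal to Artin--Mazur. Two small remarks: first, once you have Newton $=$ Hodge you are already done by Katz's criterion together with the automatic degeneration of Hodge--de Rham for K3 surfaces, so the final re-application of Theorem \ref{thm2} in your converse direction is redundant; second, the identification of the $n_i$ with the Hodge--Tate weights is the one step you should write out carefully, since Definition \ref{ordinary-gal-rep} places no a priori constraint on the $n_i$ and the constraint comes only from the Hodge--Tate decomposition of $H^2_\et$ of a K3 with good reduction.
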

\newtheorem*{introthm3}{Theorem \ref{thm3}}
\begin{remark}
    Combined with \cite{matsumoto-k3}, we obtain conditions on the
    $\gal{K}$-representation associated to a K3 surface guaranteeing
    potentially good ordinary reduction.
\end{remark}
We will deduce these results from the following more general theorem.
\begin{letterthm}
    \label{thm4}
    Suppose $X_0$ is a smooth proper variety over $K$ with good reduction such
    that the special fiber of some model $\fr{X}$ of $X_0$ over $\co_K$ has
    torsion-free crystalline cohomology. If $H^n_\et(X;\QQ_p)$ is an ordinary
    $\gal{K}$-representation whose filtration (from Definition
    \ref{ordinary-gal-rep}) splits, then the Newton polygon for $\fr{X}_\sp$
    has integer slopes.
\end{letterthm}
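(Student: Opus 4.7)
The plan is to combine the crystalline comparison theorem with the structure of ordinary Galois representations: each graded piece of the ordinary filtration, after untwisting by the appropriate power of the cyclotomic character, becomes unramified, and then Mazur's ``Newton above Hodge'' inequality forces integer Newton slopes.

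First, I would invoke the Faltings--Tsuji crystalline comparison: under the torsion-freeness hypothesis on $H^\ast_\cris(\fr{X}_\sp/W(k))$, the representation $V := H^n_\et(X;\QQ_p)$ is crystalline and there is a natural isomorphism of filtered $\varphi$-modules
\[
D_\cris(V) \cong H^n_\cris(\fr{X}_\sp/W(k))[1/p].
\]
In particular, the Newton polygon of $\fr{X}_\sp$ in degree $n$ equals the Newton polygon of the Frobenius on $D_\cris(V)$, so it suffices to show the latter has integer slopes.

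Next, I would use the splitting hypothesis to decompose $V = \bigoplus_i V_i$ as a $\gal{K}$-representation with inertia acting on $V_i$ by the character $\chi^{n_i}$. A direct summand of a crystalline representation is crystalline (by left exactness of $D_\cris$ together with the dimension bound $\dim_{K_0} D_\cris(W) \le \dim_{\QQ_p} W$), so each $V_i$ is crystalline and $D_\cris(V) = \bigoplus_i D_\cris(V_i)$. Setting $U_i := V_i \otimes \QQ_p(-n_i)$, inertia acts trivially on $U_i$, so $U_i$ is unramified, hence crystalline with all Hodge--Tate weights equal to zero.

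For the unramified $U_i$ the Hodge polygon of $D_\cris(U_i)$ is identically zero, so Mazur's inequality (Newton polygon lies on or above Hodge polygon with matching endpoints) forces the nondecreasing Newton slopes to start at a nonnegative value while summing to zero; hence all slopes vanish. The twist compatibility
\[
D_\cris(V_i) = D_\cris(U_i) \otimes_{K_0} D_\cris(\QQ_p(n_i)),
\]
together with the computation that Frobenius acts by $p^{-n_i}$ on the one-dimensional space $D_\cris(\QQ_p(n_i)) \cong K_0$, shows every Newton slope of $D_\cris(V_i)$ equals the integer $-n_i$. Summing over $i$ gives the desired integer slopes on $D_\cris(V)$.

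The main technical inputs (the crystalline comparison theorem and Mazur's inequality) are standard in this setting, so the content is essentially conceptual: the definition of ordinary Galois representation is engineered precisely so that untwisting each graded piece kills the ramification, and an unramified crystalline representation has vanishing Newton slopes by Hodge--Tate considerations. The only genuinely delicate bookkeeping is verifying that $D_\cris$ respects the splitting $V = \bigoplus_i V_i$, which reduces to the fact that direct summands of crystalline representations are crystalline; signs and conventions for the Tate twist must also be tracked, though they do not affect the integrality conclusion.
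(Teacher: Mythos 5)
Your argument is correct, but it takes a genuinely different route from the paper's. The paper works integrally: it uses the torsion-freeness hypothesis to invoke the Breuil--Kisin functor of Bhatt--Morrow--Scholze, identifies $M(T)\otimes_{\fr{S}}W(k)$ with $H^n_\cris(\fr{X}_\sp/W(k))$, shows the split ordinary filtration on the lattice decomposes this $F$-crystal into rank-one pieces, and then quotes the Dieudonn\'e--Manin classification of rank-one $F$-crystals to get integer slopes. You work rationally: via the crystalline comparison $D_\cris(V)\cong H^n_\cris(\fr{X}_\sp/W(k))[1/p]$ you split $V$ into Tate twists of unramified representations, observe that an unramified crystalline representation has flat Hodge polygon and hence all Newton slopes zero, and then note that twisting shifts slopes by integers. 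Both arguments are sound. Your route has the notable advantage that the rational comparison isomorphism requires no torsion-freeness hypothesis, so your argument in fact proves a slightly stronger statement; the paper's integral route is the one that interfaces with the lattice-level machinery it uses elsewhere. Two small points to make explicit when you write this up: first, the inequality you attribute to Mazur is here being applied not to crystalline cohomology but to the weakly admissible filtered $\varphi$-module $D_\cris(U_i)$, where it follows from weak admissibility ($t_N(D')\geq t_H(D')$ for $\varphi$-stable subobjects, applied to the lowest-slope step of the slope filtration over $K_0$, together with $t_N=t_H$ on the whole module); in the degenerate case of a flat Hodge polygon this does force all slopes to vanish, as you say. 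Second, with the normalization forced by $H^n_\cris$ having slopes in $[0,n]$, the exponents $n_i$ appearing in the graded pieces of $H^n_\et(X;\QQ_p)$ are nonpositive, so the slopes $-n_i$ are the expected nonnegative integers; as you note, this bookkeeping does not affect integrality.
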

\newtheorem*{introthm4}{Theorem \ref{thm4}}

\subsection*{Acknowledgements}
Thanks to Matthew Emerton, Mark Kisin, Sameera Vemulapalli, and David
Zureick-Brown for chatting with me. Thanks also to the anonymous reader who
pointed out errors in an earlier draft of this writeup, and to Bjorn Poonen for
forwarding these comments to me.

\section{Background}
\subsection{Proving Theorem \ref{thm1}}
Our goal is to prove Theorem \ref{thm1}, which we will recall here for the
reader's convenience.
\begin{introthm1}
    Let $A$ be an abelian variety with good reduction, and let $V$ denote the
    $p$-adic $\gal{K}$-representation $\H^n_\et(A\otimes_K \ol{K};\QQ_p)$. Then
    $A$ is ordinary iff there is a complete filtration $F^i V$ of $V$ which
    splits, such that the inertia subgroup of $\gal{K}$ acts trivially on $F^i
    V/F^{i+1} V$ for $i>\dim A$, and by the $p$-adic cyclotomic character for
    $i\leq \dim A$.
\end{introthm1}
\begin{proof}
    In this case, we only have to consider the case $n=1$. Assume that $A$ is
    ordinary. Since $A$ has good reduction, we lift it to a smooth proper model
    $\cA$ over $\co_K$. Let $\cA[p^\infty]$ denote the $p$-divisible group of
    its special fiber. The connected-\'etale sequence runs
    $$0\to \cA[p^\infty]^\circ \to \cA[p^\infty]\to \cA[p^\infty]^\et\to 0;$$
    taking the rational $p$-adic Tate module gives
    $$0\to V_p(\cA[p^\infty]^\circ) \to V_p(\cA[p^\infty]) \to
    V_p(\cA[p^\infty]^\et) \to 0.$$
    The $\gal{K}$-representation $V_p(\cA[p^\infty]^\et)$ is unramified, while
    the inertia subgroup of $\gal{K}$ acts on $V_p(\cA[p^\infty]^\circ)$ by a
    direct sum of copies of the $p$-adic cyclotomic character. By \cite[Theorem
    2.4]{neron-ogg-k3}, we know that as $\gal{K}$-representations, we have
    $$H^n_\et(A\otimes_K\ol{K}; \QQ_p) \simeq
    H^n_\et(\cA\otimes_{\co_K}\widehat{\ol{K}}; \QQ_p).$$
    Note that $V_p(A) = H^1_\et(A\otimes_K\ol{K};\QQ_p)^\vee$, so
    $V_p(\cA[p^\infty])$ can be identified with $V_p(A) =: V$ as
    $\gal{K}$-representations. As $A$ is ordinary, we know that $\cA[p^\infty]$
    is an extension of an \'etale $p$-divisible group by a multiplicative
    $p$-divisible group; taking the rational Tate module of the resulting
    filtration on $\cA[p^\infty]$ gives a filtration on $V$ (with $F^{\dim A} V
    = V_p(\cA[p^\infty]^\circ)$) satisfying the conditions listed in Theorem
    \ref{thm1}.

    For the converse, suppose that the $p$-adic $\gal{K}$-representation
    $\H^1_\et(A\otimes_K\ol{K};\QQ_p) =: V$ satisfies the conditions in Theorem
    \ref{thm1}. Then $V$ contains a $\gal{K}$-stable $\Z_p$-lattice
    $\Lambda:=H^1_\et(A\otimes_K\ol{K};\Z_p)$.  Under the correspondence of
    \cite[Proposition 7.2.2]{brinon-conrad}, this lattice comes from the
    $p$-divisible group $A[p^\infty]$. Since $A$ has good reduction, this can
    be lifted to a $p$-divisible group over $\co_K$.

    Let $W\subsetneq V$ denote the $\dim A$-th step of the filtration on $V$.
    We get a $\gal{K}$-stable $\Z_p$-lattice $W \cap \Lambda$ of $W$ which is
    contained inside $\Lambda$. Again referring to \cite[Proposition
    7.2.2]{brinon-conrad}, we can find a $p$-divisible subgroup $\GG$ of
    $A[p^\infty]$. This in turn can be lifted to a subgroup $\wt{\GG}$ of the
    $p$-divisible group over $\co_K$. By our assumptions on $V$, the associated
    $\gal{K}$-representation itself admits a complete split filtration, with
    the inertia subgroup of $\gal{K}$ acting on each quotient by the $p$-adic
    cyclotomic character. Thereby identifying $\wt{\GG}$ with a multiplicative
    $p$-divisible group, we find that the special fiber $\cA_\sp$ is an
    ordinary abelian variety, as desired.
\end{proof}

\subsection{Breuil--Kisin modules}
In order to prove Theorem \ref{thm4}, we will need to recall some of the theory
of Breuil--Kisin modules. Let $\fr{S} = W(k)[[T]]$. There is a Frobenius
$\varphi$ on $\fr{S}$ defined by the usual Frobenius on $W(k)$ and the map
$T\mapsto T^p$. The map $\fr{S}\to \co_K$ sending $T$ to a uniformizer $\pi$
has kernel generated by $E(T)$, the Eisenstein polynomial for $\pi$. Recall:
\begin{definition}
    A Breuil--Kisin module $M$ is a $\fr{S}$-module along with an isomorphism
    $$\varphi_M: M\otimes_\fr{S}^\varphi \fr{S}[\frac{1}{E}] \simeq
    M[\frac{1}{E}].$$
\end{definition}
\begin{example}[Tate twists]
    Let $\fr{S}\{1\}$ denote the Breuil--Kisin module with underlying
    $\fr{S}$-module $\fr{S}$ and Frobenius $\varphi_{\fr{S}\{1\}}(x) =
    \frac{u}{E(T)}\varphi(x)$, with $u$ some explicit unit in $\fr{S}$. We
    write $\fr{S}\{n\} = \fr{S}\{1\}^{\otimes n}$.
\end{example}
\begin{remark}
    In analogy with Definition \ref{ordinary-gal-rep}, we say that a
    Breuil--Kisin module $(M,\varphi_M)$ is ordinary if there is a filtration
    by submodules $(F^i M,\varphi_M|_{F^i M})$ such that each successive
    quotient is a finite free Breuil--Kisin module of rank $1$.
\end{remark}
We will need the following result (see \cite[Theorem 4.4]{bms-i}):
\begin{theorem}\label{bk}
    There is a fully faithful tensor functor $M$ from $\Z_p$-lattices $\Lambda$
    in crystalline $\gal{K}$-representations $V$ to finite free Breuil--Kisin
    modules, characterized by the property that there is a
    $\varphi,\gal{K_\infty}$-equivariant identification
    $$M(\Lambda)\otimes_\fr{S} W(C^\flat) \simeq \Lambda\otimes_{\Z_p}
    W(C^\flat).$$
\end{theorem}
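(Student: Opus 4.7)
The plan is to construct $M$ explicitly via Galois invariants, following Kisin's approach. Fix a compatible system $(\pi_n)_{n\ge 0}$ of $p$-power roots of a uniformizer $\pi$, so that $\pi^\flat := (\pi_n)_n \in C^\flat$, and embed $\fr{S} \hookrightarrow W(C^\flat)$ via $T\mapsto [\pi^\flat]$. Set $K_\infty = \bigcup_n K(\pi_n)$; then $\gal{K_\infty}$ fixes the image of $\fr{S}$ pointwise, and acts $\fr{S}$-linearly on $W(C^\flat)$. Given a $\gal{K}$-stable lattice $\Lambda$ in a crystalline representation $V$, define
$$M(\Lambda) := \bigl(\Lambda \otimes_{\Z_p} W(C^\flat)\bigr)^{\gal{K_\infty}},$$
with Frobenius inherited from $\varphi$ on $W(C^\flat)$. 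The identification $M(\Lambda)\otimes_\fr{S} W(C^\flat) \simeq \Lambda\otimes_{\Z_p} W(C^\flat)$ demanded by the statement is then built into the construction, the nontrivial assertion being that the natural comparison map is an isomorphism.

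The main work is to verify that $M(\Lambda)$ is finite free over $\fr{S}$ of rank $\mathrm{rk}_{\Z_p}\Lambda$, with $\varphi_M$ an isomorphism after inverting $E(T)$. Fontaine's theory (in its Kummer-tower variant) shows that after inverting $T$ and $p$-adically completing, $M(\Lambda)$ yields an étale $\varphi$-module of the correct rank over the completed localization of $\fr{S}$. The crucial step is then to extend this étale $\varphi$-module canonically across $T=0$ to a finite free $\fr{S}$-module. Crystallinity of $V$ enters decisively here: Kisin's argument constructs the extension using the filtered $\varphi$-module $D_\cris(V)$ and its weak admissibility, producing a finite free $\fr{S}$-lattice naturally attached to $V$. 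The Frobenius condition after inverting $E(T)$ then mirrors the fact that $E(T)$ generates the kernel of the surjection $\fr{S}\twoheadrightarrow \co_K$, analogous to how Fontaine's period $\xi\in W(C^\flat)$ generates $\ker\theta$.

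The principal obstacle is the extension step above — producing a finite free $\fr{S}$-model of an étale $\varphi$-module from the crystalline data — together with checking tensor-compatibility of this extension. Granted this, fully faithfulness is essentially formal: one has an explicit quasi-inverse on the essential image via $M\mapsto (M\otimes_\fr{S} W(C^\flat))^{\varphi=1}$, using that $W(C^\flat)^{\varphi=1}=\Z_p$ and the $\gal{K_\infty}$-equivariance of the identification. Tensor-functoriality then follows from tensor-functoriality of base change to $W(C^\flat)$ and of Galois invariants on finite free modules, once Kisin's extension is shown to respect tensor products.
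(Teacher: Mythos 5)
The paper does not actually prove this statement: it is quoted verbatim from \cite[Theorem 4.4]{bms-i}, whose content is essentially Kisin's theorem on Breuil--Kisin modules attached to crystalline lattices. So your proposal should be measured against the proof in the literature rather than against anything in this paper. Your outline does follow the standard strategy (Fontaine's \'etale $\varphi$-modules over the Kummer tower, plus Kisin's descent to $\fr{S}$ using $D_{\cris}(V)$ and weak admissibility), and you correctly isolate where the real work lies, but two specific points are off.

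First, the opening definition $M(\Lambda) := (\Lambda\otimes_{\Z_p} W(C^\flat))^{\gal{K_\infty}}$ does not produce a finite $\fr{S}$-module. Since taking $\gal{K_\infty}$-invariants commutes with Witt vectors of perfect rings, $W(C^\flat)^{\gal{K_\infty}} = W(\widehat{K_\infty}^\flat)$, and your invariants form a module over this ring, which is vastly larger than $\fr{S}$. Many distinct finite free $\fr{S}$-lattices have the same base change to $W(\widehat{K_\infty}^\flat)$, so the characterizing isomorphism is emphatically not ``built into the construction'': the choice of $\fr{S}$-lattice is precisely the extra datum the theorem asserts exists canonically. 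The correct first step is to take invariants inside the smaller period ring (the completed maximal unramified extension of the $p$-adic completion of $\fr{S}[1/T]$ inside $W(C^\flat)$), which yields Fontaine's \'etale $\varphi$-module over that completed localization; only then does Kisin's construction via $D_{\cris}(V)$ single out the $\fr{S}$-lattice, and this is the step you (reasonably, for a sketch) defer entirely. Second, full faithfulness is not formal. Your $\varphi=1$ recipe recovers $\Lambda$ only as a $\gal{K_\infty}$-representation, so it gives faithfulness and shows that morphisms of Breuil--Kisin modules induce $\gal{K_\infty}$-equivariant maps of lattices. To promote these to $\gal{K}$-equivariant maps one needs the nontrivial theorem of Kisin (refined for lattices by Breuil and Liu) that restriction from crystalline $\gal{K}$-representations to $\gal{K_\infty}$-representations is fully faithful; this input should be named rather than absorbed into ``essentially formal.''
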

\begin{example}\label{tate}
    We have $M(\Z_p(n)) \simeq \fr{S}\{n\}$.
\end{example}
The functor of Theorem \ref{bk} is not generally exact. However, we have:
\begin{lemma}\label{ordinary-bk}
    Let $V$ be an ordinary crystalline $\gal{K}$-representation, and let
    $\Lambda$ be
    a $\gal{K}$-stable $\Z_p$-lattice in $V$. Then $M(\Lambda)$ is an ordinary
    Breuil--Kisin module.
\end{lemma}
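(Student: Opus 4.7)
The plan is to proceed by induction on the rank $r = \dim_{\QQ_p} V$, letting $F^\bullet V$ be the filtration witnessing ordinarity from Definition \ref{ordinary-gal-rep}.

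For the base case $r = 1$, the inertia subgroup acts on $V$ by a single power $\chi^n$ of the cyclotomic character, and since $V$ is crystalline and one-dimensional, it is isomorphic to $\QQ_p(n) \otimes \psi$ for an unramified crystalline character $\psi$. Using the tensor property of the functor $M$ from Theorem \ref{bk} together with Example \ref{tate}, we identify $M(\Lambda) \simeq \fr{S}\{n\} \otimes_\fr{S} M(\Lambda_\psi)$, where $\Lambda_\psi$ is a lattice in $\psi$. The standard classification of rank-one crystalline characters identifies $M(\Lambda_\psi)$ with a rank-one free Breuil--Kisin module (underlying module $\fr{S}$, Frobenius given by multiplication by an appropriate unit), so $M(\Lambda)$ is rank-one free and trivially ordinary.

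For the inductive step, let $W = F^{r-1}V$ be the last non-trivial step of the filtration; it is $(r-1)$-dimensional and $V/W$ is one-dimensional. Set $\Lambda_W := \Lambda \cap W$ and $\ol{\Lambda} := \Lambda/\Lambda_W$, which are $\gal{K}$-stable $\Z_p$-lattices in $W$ and $V/W$, respectively. Both $W$ and $V/W$ are crystalline as subquotients of $V$ and inherit ordinary filtrations from $F^\bullet V$, so by the inductive hypothesis $M(\Lambda_W)$ is ordinary and by the base case $M(\ol{\Lambda})$ is rank-one free.

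The key step is to show that the functor $M$ takes the short exact sequence $0 \to \Lambda_W \to \Lambda \to \ol{\Lambda} \to 0$ to a short exact sequence
$$0 \to M(\Lambda_W) \to M(\Lambda) \to M(\ol{\Lambda}) \to 0$$
of Breuil--Kisin modules. Once this is established, concatenating the ordinary filtration of $M(\Lambda_W)$ supplied by induction with $M(\Lambda)$ itself at the top yields an ordinary filtration of $M(\Lambda)$, whose new top quotient is $M(\ol{\Lambda})$, rank-one free by the base case. To prove the exactness claim, I would tensor with $W(C^\flat)$ and use the characterization in Theorem \ref{bk} to identify the base-changed sequence with
$$0 \to \Lambda_W \otimes_{\Z_p} W(C^\flat) \to \Lambda \otimes_{\Z_p} W(C^\flat) \to \ol{\Lambda} \otimes_{\Z_p} W(C^\flat) \to 0,$$
which is exact because $\ol{\Lambda}$ is $\Z_p$-flat. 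The main obstacle is then to descend this exactness from $W(C^\flat)$ back to $\fr{S}$; the cleanest route invokes the (faithful) flatness of $W(C^\flat)$ over $\fr{S}$, and alternatively one can argue directly by exploiting that $M(\Lambda)$ is a finite free $\fr{S}$-module of rank $r$ and showing that $M(\Lambda_W)$ sits inside as a saturated sub-module of rank $r-1$, forcing the quotient to be free of rank one and to match $M(\ol{\Lambda})$ via full faithfulness. With these pieces in place, the lemma follows.
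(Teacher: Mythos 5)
Your overall strategy --- intersect the filtration with $\Lambda$, apply $M$, and show the graded pieces are rank-one free --- is the same as the paper's (the paper does it in one pass over the whole filtration rather than by induction on the rank, but that is cosmetic), and you correctly isolate the real difficulty, namely that $M$ is not exact, which the paper itself flags just before the lemma. The problem is that your proposed resolution of that difficulty does not work. The ring $W(C^\flat)$ is \emph{not} faithfully flat over $\fr{S}=W(k)[[T]]$: the structure map sends $T$ to the Teichm\"uller lift $[\pi^\flat]$, which is a unit in $W(C^\flat)$ because $C^\flat$ is a field, so the image of $\spec W(C^\flat)\to \spec \fr{S}$ consists only of the generic point and the prime $(p)$, and misses the closed point $(p,T)$ and the prime $(E(T))$. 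Consequently a finitely generated $\fr{S}$-module $Q$ with $Q\otimes_{\fr{S}}W(C^\flat)=0$ need not vanish --- $Q=\fr{S}/(p,T)$ is a counterexample --- so exactness after $\otimes_{\fr{S}}W(C^\flat)$ gives you neither surjectivity of $M(\Lambda)\to M(\ol{\Lambda})$ nor control of torsion in $\coker\bigl(M(\Lambda_W)\to M(\Lambda)\bigr)$. This failure of faithful flatness at the closed point is precisely the mechanism by which $M$ fails to be exact in general, so the ``cleanest route'' is not merely incomplete but appeals to a false statement.

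Your fallback --- show that $M(\Lambda_W)$ sits inside $M(\Lambda)$ as a saturated submodule, so that the quotient is finite free of rank one and can then be matched with $M(\ol{\Lambda})$ by full faithfulness --- has the right shape, and is essentially what the paper's terse ``it follows that'' is asserting; but saturatedness is exactly the content that needs an argument, and you supply none. A priori the cokernel is only a finitely generated $\fr{S}$-module of generic rank one, and could be non-free (e.g.\ isomorphic to the ideal $(p,T)$) or have torsion supported at the closed point, which $W(C^\flat)$ cannot detect. What the $W(C^\flat)$-characterization does give for free is injectivity of $M(\Lambda_W)\to M(\Lambda)$ (since $\fr{S}\hookrightarrow W(C^\flat)$ and the modules are finite free) and the rank of the cokernel; everything beyond that requires an input you have not provided. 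One further small point: both you and the paper implicitly assume the graded pieces of the filtration in Definition \ref{ordinary-gal-rep} are one-dimensional, which that definition does not literally require; note also that the splitting of the filtration is treated by the paper only afterwards, in Remark \ref{split}, and is not available inside this lemma.
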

\begin{proof}
    Let $F^i V$ be the filtration on $V$ (from Definition
    \ref{ordinary-gal-rep}). Then $F^i V \cap \Lambda =: F^i \Lambda$ forms a
    filtration of $\gal{K}$-stable $\Z_p$-lattices inside $\Lambda$. As
    $W(C^\flat)$ is torsion-free, the characterization of $M(\Lambda)$ from
    Theorem \ref{bk} proves that the rank of $M(\Lambda)$ as a $\fr{S}$-module
    is the rank of $\Lambda$ as a $\Z_p$-lattice. It follows that $M(F^i
    \Lambda)/M(F^{i+1} \Lambda)$ is a finite free Breuil--Kisin module of rank
    $1$.
\end{proof}
\begin{remark}\label{split}
    If the filtration on $V$ splits, then so does the filtration on $M(\Lambda)$.
    Indeed, there is a canonical map $M(F^i \Lambda)/M(F^{i+1} \Lambda)\to
    M(F^i \Lambda/F^{i+1} \Lambda)$; by functoriality, we get a map $M(F^i
    \Lambda/F^{i+1} \Lambda)$ which splits the exact sequence defining $M(F^i
    \Lambda)/M(F^{i+1} \Lambda)$.
\end{remark}

\section{The proof of Theorem \ref{thm4}}
We recall Theorem \ref{thm4} for the reader's convenience.
\begin{introthm4}
    Suppose $X_0$ is a smooth proper variety over $K$ with good reduction such
    that the special fiber of some model $\fr{X}$ of $X_0$ over $\co_K$ has
    torsion-free crystalline cohomology. If $H^n_\et(X;\QQ_p)$ is an ordinary
    $\gal{K}$-representation whose filtration splits, then the Newton polygon
    for $\fr{X}_\sp$ has integer slopes.
\end{introthm4}
\begin{proof}
    Suppose that $X_0$ is as above, and that $V = H^n_\et(X;\QQ_p)$. Since
    $H^\ast_\cris(\fr{X}_\sp/W(k))$ is torsion-free, we learn from
    \cite[Theorem 1.1(ii)]{bms-i} and \cite[Theorem 2.4]{neron-ogg-k3} that
    $H^n_\et(X;\Z_p) =: T$ is a $\gal{K}$-stable $\Z_p$-lattice inside $V$. The
    discussion preceding \cite[Theorem 1.4]{bms-i} implies that
    \begin{equation}\label{cris-compare}
	M(T)\otimes_{\fr{S}} W(k) = H^n_\cris(\fr{X}_\sp/W(k))
    \end{equation}
    Suppose $V$ is an ordinary $\gal{K}$-representation whose filtration
    splits. Then by Lemma \ref{ordinary-bk}, Remark \ref{split}, and Equation
    \eqref{cris-compare}, we find that $H^n_\cris(\fr{X}_\sp/W(k))$ is a
    $F$-crystal over $k$ which splits into a direct sum of rank one
    $F$-crystals.  By the Dieudonn\'e--Manin classification, rank one
    $F$-crystals are of the form $M_{r/1} = W(k)\langle T\rangle/(T = p^r)$ for
    $r\in \Z_{\geq 0}$. In particular, the Newton polygon for $\fr{X}_\sp$ has
    integer slopes, as desired.
\end{proof}
\begin{remark}
    The slopes of the Hodge polygon of a smooth proper variety are always
    integers.
\end{remark}
\begin{proof}[Proof of Theorem \ref{thm2}]
    This follows from Theorem \ref{thm4} and \cite[Proposition
    7.3(7)]{bloch-kato}.
\end{proof}

In order to prove Theorem \ref{thm3}, we will need the
following result from \cite{katz-products}:
\begin{prop}\label{ordinary-cond}
    Let $X_0$ be a smooth proper variety over $K$ with good reduction. Let
    $\fr{X}$ be a lift of $X_0$ to $\co_K$. Suppose that
    $H^\ast_\cris(\fr{X}_\sp/W(k))$ is torsion-free. Then $X_0$ is ordinary if
    and only if the following conditions are satisfied:
    \begin{itemize}
	\item the Hodge--de-Rham spectral sequence
	    $$E_1^{s,t} = H^t(\fr{X}_\sp;\Omega^s_{\fr{X}_\sp/k}) \Rightarrow
	    H^{s+t}_\mathrm{dR}(\fr{X}_\sp/k)$$
	    collapses at the $E_1$-page.
	\item the Newton and Hodge polygons of $H^\ast_\cris(\fr{X}_\sp/W(k))$
	    coincide.
    \end{itemize}
\end{prop}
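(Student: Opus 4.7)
The plan is to relate the Bloch--Kato ordinariness condition to the structure of the slope spectral sequence
$$E_1^{i,j} = H^j(\fr{X}_\sp, W\Omega^i_{\fr{X}_\sp/k}) \Rightarrow H^{i+j}_\cris(\fr{X}_\sp/W(k)),$$
following Illusie--Raynaud. The crucial reformulation I would use is that the vanishing $H^i(\fr{X}_\sp, d\Omega^j) = 0$ for all $i,j$ is equivalent to each $H^j(\fr{X}_\sp, W\Omega^i)$ being a finite free $W(k)$-module on which Frobenius acts as $p^i$ times a unit (a ``unit-root $F$-crystal up to Tate twist''). This equivalence is extracted from the short exact sequence $0 \to d\Omega^{j-1} \to Z\Omega^j \xrightarrow{C} \Omega^j \to 0$ and its de Rham--Witt analogue involving $F$ and $V$, iterated over the levels of the de Rham--Witt tower.

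For the forward implication, I would argue as follows. Under Bloch--Kato ordinariness, the unit-root description above implies the slope spectral sequence degenerates at $E_1$ and gives $H^n_\cris(\fr{X}_\sp/W(k)) = \bigoplus_{i+j=n} H^j(\fr{X}_\sp, W\Omega^i)$, with the summand indexed by $(i,j)$ being the slope-$i$ part; hence the Newton polygon of $H^n_\cris$ has slope $i$ with multiplicity $\dim_k H^j(\fr{X}_\sp, \Omega^i)$, agreeing with the Hodge polygon. For $E_1$-degeneration of Hodge--de Rham, the Cartier isomorphism under ordinariness yields $H^j(W\Omega^i)/p \simeq H^j(\Omega^i)$; combined with torsion-freeness of crystalline cohomology and base change to $k$, this forces the equality $\dim_k H^n_\dR = \sum_{i+j=n} \dim_k H^j(\Omega^i)$, which is exactly the degeneration criterion.

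Conversely, assume Hodge--de Rham degenerates and the Newton and Hodge polygons coincide on each $H^n_\cris$. Mazur's theorem gives that the Hodge polygon lies on or below the Newton polygon; equality, together with torsion-freeness, forces $H^n_\cris$ to split into $F$-crystals of pure integer slope whose multiplicities equal the Hodge numbers $h^{i,n-i}$. One then promotes this rational decomposition to an integral degeneration of the slope spectral sequence, identifies each $H^j(W\Omega^i)$ with the slope-$i$ summand, and reduces mod $p$ to recover the Cartier isomorphism, which implies $H^i(\fr{X}_\sp, d\Omega^j) = 0$. The main obstacle is precisely this last step: upgrading a rational slope decomposition into integral degeneration of the slope spectral sequence. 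This is where torsion-freeness is essential, as it rules out the ``dominoes'' in the sense of Illusie that could otherwise obstruct integral degeneration; the argument then proceeds via Ekedahl duality or a Nygaard-filtration analysis to match integral slope pieces with de Rham--Witt summands.
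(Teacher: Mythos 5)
The paper does not prove this proposition at all: it is imported verbatim from the literature (the citation \cite{katz-products}; the underlying results are Illusie--Raynaud and Bloch--Kato, Prop.~7.3), so there is no in-paper argument to measure you against. Your sketch reconstructs what is essentially the standard proof, and the architecture is correct: reformulate Bloch--Kato ordinarity as bijectivity of $F$ on each $H^j(\fr{X}_\sp, W\Omega^i_{\fr{X}_\sp})$, use degeneration of the slope spectral sequence to write $H^n_\cris(\fr{X}_\sp/W(k))$ as a sum of pure integer-slope pieces, and compare ranks with Hodge numbers.

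Two points where the plan is thinner than it should be. In the forward direction, the $E_1$-degeneration of Hodge--de Rham is not best obtained by a rank count through ``$H^j(W\Omega^i)/p \simeq H^j(\Omega^i)$ plus torsion-freeness''; the direct argument is that $d_1$ on the $E_1$-page factors through $H^j(\fr{X}_\sp, d\Omega^i) = 0$, with the higher differentials killed by the d\'evissage through the sheaves $Z_n\Omega^j$ and $B_n\Omega^j$. More seriously, in the converse direction the step you single out as the main obstacle --- promoting the rational slope decomposition to an integral one --- is not where Ekedahl duality or Illusie's dominoes enter: under torsion-freeness and Hodge--de Rham degeneration, Mazur's theorem identifies the abstract Hodge polygon of the $F$-crystal $H^n_\cris$ with the geometric one, and Katz's Hodge--Newton decomposition then splits the $F$-crystal \emph{integrally} into Tate twists of unit-root crystals whenever Newton equals Hodge; this is pure semilinear algebra. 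The genuinely nontrivial remaining step, which your last sentence compresses into a clause, is passing from that splitting of $H^n_\cris$ back to the bijectivity of $F$ on each $H^j(W\Omega^i)$ and thence to the vanishing of $H^i(\fr{X}_\sp, d\Omega^j)$; that is the content of Illusie--Raynaud IV.4.13 and is where the real work lies. As written, your converse is a pointer to the literature rather than a proof --- which, to be fair, is exactly what the paper itself does.
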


\begin{proof}[Proof of Theorem \ref{thm3}]
    It is a general fact about K3 surfaces that $H^\ast_\cris(\fr{X}_\sp/W(k))$
    is torsion-free. By \cite[Proposition 2.5]{liedtke}, the conditions of
    Theorem \ref{thm4} are satisfied for every K3 surface having good
    reduction. By Theorem \ref{thm4}, we find that the slopes of the Newton
    polygon of $X_0$ must be integers. Moreover, our hypotheses on $X_0$
    imply that it cannot be a supersingular K3 surface. By the Artin--Mazur
    classification of the heights of K3 surfaces, we conclude that the slopes
    of the Newton polygon of $X_0$ can be integers if and only if the height of
    $X_0$ is $1$; this implies that the Hodge and Newton polygons coincide. By
    Proposition \ref{ordinary-cond}, we conclude that $X_0$ has potentially
    good ordinary reduction, as desired.
\end{proof}
\begin{remark}
    The same argument gives another proof of Theorem \ref{thm1}.
\end{remark}
\bibliographystyle{alpha}
\bibliography{../../main}
\end{document}